\newtheorem{thm}{Theorem}
\newtheorem{prop}[thm]{Proposition}
\newtheorem{cor}[thm]{Corollary}
\newtheorem*{mthm}{Main Theorem}
\theoremstyle{definition}
\newtheorem{defn}[thm]{Definition}
\newtheorem{rem}[thm]{Remark}
\newtheorem*{question}{Question}
\newcommand{\Sym}{\textrm{Sym}}
\title{All actions of LERF groups on sets are sofic}
\author{David Gao}
\begin{document}

\begin{abstract}
In this paper, we shall prove that all actions of LERF groups on sets are sofic. As a corollary, we obtain that a large class of generalized wreath products are sofic.
\end{abstract}

\maketitle

\section{Introduction}

In [GKEP24], a notion of sofic actions on sets, heavily inspired by [HS18], was introduced and applications to proving soficity of generalized wreath products were presented. In it, two classes of groups were shown to have all their actions being sofic, namely amenable groups and free groups. In this paper, we shall identify a new large class of groups, containing free groups and surface groups, such that any action by any such group is sofic.

We recall the following definition of sofic actions on sets, as first appeared in [GKEP24]:

\begin{defn}{[GKEP24]}
Let $G$ be a countable discrete group, $X$ be a countable discrete set, $\alpha: G \curvearrowright X$ be an action, $A$ be a finite set, $\varphi: G \rightarrow \Sym(A)$ be a map (not necessarily a homomorphism):
\begin{enumerate}
    \item $\varphi$ is called \textit{unital} if $\varphi(1_G) = 1$;
    \item For a finite subset $F \subseteq G$ and $\epsilon > 0$, $\varphi$ is called $(F, \epsilon)$-\textit{multiplicative} if $d(\varphi(gh), \varphi(g)\varphi(h)) < \epsilon$ for all $g, h \in F$, where $d$ is the normalized Hamming distance on $\Sym(A)$;
    \item For finite subsets $F \subseteq G$, $E \subseteq X$, and $\epsilon > 0$, $\varphi$ is called an $(F, E, \epsilon)$-\textit{orbit approximation of} $\alpha$ if there exists a finite set $B$ and a subset $S \subseteq A$ s.t. $|S| > (1 - \epsilon)|A|$ and for each $s \in S$ there is an injective map $\pi_s: E \hookrightarrow B$ s.t. $\pi_{\varphi(g)s}(x) = \pi_s(\alpha(g^{-1})x)$ for all $s \in S$, $g \in F$, $x \in E$, whenever $\varphi(g)s \in S$ and $\alpha(g^{-1})x \in E$;
    \item $\alpha$ is called \textit{sofic} if for all finite subsets $F \subseteq G$, $E \subseteq X$, and $\epsilon > 0$, there exists a finite set $A$ and a map $\varphi: G \rightarrow \Sym(A)$ which is unital, $(F, \epsilon)$-multiplicative, and an $(F, E, \epsilon)$-orbit approximation of $\alpha$.
\end{enumerate}
\end{defn}

We recall the following definition, which is well-known in group theory:

\begin{defn}
A group $G$ is called \textit{locally extended residually finite (LERF)} if for any finitely generated subgroup $H \leq G$, there exists a decreasing sequence of finite index subgroups $H_i \leq G$ such that $H = \cap_i H_i$.
\end{defn}

Large classes of groups are known to be LERF. For example,
\begin{enumerate}
    \item By [Hall49], free groups are LERF.
    \item By [Sco78], surface groups are LERF.
    \item By [Wil08], limit groups are LERF. This generalizes both the cases of free groups and surface groups.
    \item Certain 3-manifold groups are LERF. For example, by Theorem 4.1 of [Sco78], fundamental groups of compact Seifert fiber spaces are LERF. By [Mal58], Sol manifold groups are LERF. By [Ago13] and [Wis11], hyperbolic 3-manifold groups are LERF. Though, not all 3-manifold groups are LERF. For example, mixed 3-manifold groups are not LERF. See [Sun19].
    \item A large class of right-angled Artin groups are LERF. Namely, all right-angled Artin groups associated to graphs without any square or path of length 3 as full subgraphs are LERF. See Theorem 2 of [MR08]. For a more general characterization of when Artin groups are LERF, see [EL24].
    \item By [Bur71], free products of LERF groups are LERF.
\end{enumerate}

The main theorem of this paper is the following:

\begin{mthm}
Let $G$ be a LERF group. Then all actions of $G$ on sets are sofic.
\end{mthm}

This vastly generalizes Theorem 2.19 of [GKEP24]. Furthermore, by applying Theorems 3.6, 3.7, 3.8, and 3.9 of [GKEP24], and observing that any LERF group is residually finite, so in particular sofic and hyperlinear, we will obtain the following corollary:

\begin{cor}
Let $G$ be a LERF group. Let $\alpha: G \curvearrowright X$ be an action on a set. Then,
\begin{enumerate}
    \item If $H$ is a sofic group, then the generalized wreath product $H \wr_\alpha G$ as well as the free generalized wreath product $H \wr^\ast_\alpha G$ is sofic.
    \item If $M$ is a Connes-embeddable tracial von Neumann algebra, then the generalized wreath product $M \wr_\alpha G$ as well as the free generalized wreath product $M \wr^\ast_\alpha G$ is Connes-embeddable.
\end{enumerate}
\end{cor}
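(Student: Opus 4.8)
The plan is to realize the approximating permutations by genuine finite coset actions of $G$ and to reduce the whole statement to a single local-convergence (counting) estimate. First I would reduce to the transitive case: since $E$ is finite it meets only finitely many $G$-orbits in $X$, and a disjoint union of unital, $(F,\epsilon)$-multiplicative maps is again unital and $(F,\epsilon)$-multiplicative, so it suffices to approximate each orbit $\mathcal{O}\cong G/H$ separately (where $H$ is a point stabilizer) and take $A=\bigsqcup_k A_k$, grading the target set $B$ by the orbit index so that cross-orbit injectivity is automatic. Fix representatives $g_1,\dots,g_n\in G$ for the points of $E\cap\mathcal{O}$. Choosing a large finite symmetric $T\subseteq G$ (a ball of radius $2r$ in a fixed generating set $Y\supseteq F\cup F^{-1}$, with $r$ dictated by $F$ and the $g_i$), the subgroup $H_0:=\langle H\cap T\rangle$ is finitely generated, so LERF supplies a decreasing tower of finite-index subgroups with intersection $H_0$; as $T\setminus H$ is finite and disjoint from $H_0$, a deep enough term gives a finite-index $K\geq H_0$ with $K\cap T=H\cap T$. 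I then set $A:=G/K$ and let $\varphi\colon G\to\Sym(A)$ be the honest left coset action. Being a homomorphism, $\varphi$ is automatically unital and \emph{exactly} multiplicative, so only the orbit-approximation clause remains.

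For the charts I would use local identifications of Schreier balls rather than a global map. Take $B:=\mathcal{O}\subseteq X$. Declare a coset $s=cK$ to be \emph{good} if the radius-$r$ ball around $s$ in the Schreier graph of $G/K$ (with respect to $Y$) is isomorphic, as a labeled rooted graph, to the radius-$r$ ball around some point $d_{s}H$ in the Schreier graph of $G/H$; concretely this means $cKc^{-1}\cap T = d_{s}Hd_{s}^{-1}\cap T$ for a suitable lift $d_s$. For such $s$ I set
\[
\pi_{s}(g_iH):=d_{s}\,g_iH\in B .
\]
Injectivity of $\pi_s$ on $E$ is then automatic, since left translation by $d_s$ is a bijection of $G/H$ and the $g_iH$ are distinct; crucially this does \emph{not} collapse points lying in a common $H$-orbit, which a naive double-coset chart would. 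The compatibility $\pi_{\varphi(g)s}(x)=\pi_s(\alpha(g^{-1})x)$ for $g\in F$ holds because the balls at $s$ and at $\varphi(g)s=gcK$ are both restrictions of the same ball in the Cayley graph of $G$, so their identifications with $G/H$ agree on the overlap; here the caveat ``whenever $\varphi(g)s\in S$ and $\alpha(g^{-1})x\in E$'' is exactly what lets me require consistency only between good cosets. Choosing the lifts $d_s$ coherently along the Schreier graph (so that $d_{gcK}H=g\,d_{cK}H$ on the good set) is a routine connectivity argument rather than a genuine difficulty.

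Everything therefore collapses to the density estimate $|S|>(1-\epsilon)|A|$, where $S$ is the set of good cosets. A coset $cK$ is bad precisely when $cKc^{-1}\cap T$ fails to equal $dHd^{-1}\cap T$ for every $d$, i.e. when an unexpected short relation (equivalently, an unexpected fixed point of some $\varphi(m)$ with $m\notin H$ a bounded-length element built from the $g_i$) appears near $cK$. Thus I must show that the bad cosets make up less than an $\epsilon$-fraction of $G/K$. This is precisely a Benjamini--Schramm / Farber-type statement: the finite coset actions $G\curvearrowright G/K$ should locally converge to the orbit $G\curvearrowright G/H$, in the sense that for each fixed $m\notin H$ the fixed-point density $[G:K]^{-1}\,|\mathrm{Fix}_{G/K}(m)|$ is small.

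I expect this density estimate, rather than the construction of $\varphi$ or of the charts, to be the decisive obstacle. It is \emph{not} automatic for an arbitrary finite-index $K$ merely satisfying $K\cap T=H\cap T$: controlling the conjugates $cKc^{-1}$, which govern the local pictures away from the basepoint and are not pinned down by the condition on $T$ alone, requires a careful choice of $K$. Here is where the full force of LERF (together with residual finiteness) must be exploited --- for instance by descending far enough in the tower $H_i\downarrow H_0$, or by intersecting with deep finite-index normal subgroups, so that the finitely many elements $m$ capable of producing collisions have vanishingly small fixed-point density. Once this local convergence is secured, assembling the orbits and reading off soficity is purely formal.
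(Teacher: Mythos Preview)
You are not proving the stated corollary. The corollary concerns soficity (resp.\ Connes-embeddability) of the generalized wreath products $H\wr_\alpha G$, $H\wr_\alpha^\ast G$ (resp.\ $M\wr_\alpha G$, $M\wr_\alpha^\ast G$), and in the paper it is deduced in one line: the Main Theorem gives that $\alpha$ is sofic, LERF implies residual finiteness so $G$ is sofic and hyperlinear, and then one quotes Theorems~3.6--3.9 of [GKEP24]. Your proposal never mentions wreath products, von Neumann algebras, or these black-box results; what you are actually sketching is a proof of the Main Theorem itself.

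Even viewed as an attempt at the Main Theorem, there is a genuine gap, and it is precisely the one you flag. With $A=G/K$ for a finite-index $K$ satisfying only $K\cap T=H\cap T$, you need a $(1-\epsilon)$-fraction of cosets $cK$ to be ``good'' (their $T$-ball in the Schreier graph matches some $T$-ball in $G/H$), and you do not prove this; you merely hope that descending further in the tower or intersecting with deep normal subgroups will force a Farber-type fixed-point decay. Nothing in the LERF hypothesis controls the conjugates $cKc^{-1}$ away from the basepoint, so this is not a routine step. Your claim that the coherent choice of lifts $d_s$ with $d_{gcK}H=g\,d_{cK}H$ on the good set is a ``routine connectivity argument'' is also optimistic: that is a cocycle condition which loops in the Schreier graph of $G/K$ can obstruct. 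The paper's route bypasses both problems. It proves a Chabauty-closure statement (if each $G\curvearrowright G/H_i$ is sofic and $H_i\to H$ then $G\curvearrowright G/H$ is sofic) by fixing a section $\sigma:G/H\to G$, choosing $i$ so large that the finitely many elements $\sigma(x)^{-1}\sigma(y)$ (for $x\neq y$ in $E$) avoid $H_i$ while the finitely many $\sigma(\alpha(g^{-1})x)^{-1}g^{-1}\sigma(x)$ lie in $H_i$, and pulling back an orbit approximation for $G\curvearrowright G/H_i$ along $q\circ\sigma$. For finite-index $H_i$ that approximation is produced exactly, with $A=\Sym(G/H_i)$, $S=A$, $B=G/H_i$ and $\pi_s(x)=s^{-1}x$, so every $s$ is good and no density estimate is ever needed; LERF is invoked solely to furnish the sequence $H_i\searrow H$.
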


For a definition of generalized wreath product and free generalized wreath product, see, for example, Definitions 3.1 and 3.2 of [GKEP24]. The technical result we will prove for the main theorem also allows us to prove the following:

\begin{thm}
Let $G$ be a residually finite group, $\alpha: G \curvearrowright X$ be an action on a set. If all stabilizers of $\alpha$ are centralizers of elements of $G$, then $\alpha$ is sofic.
\end{thm}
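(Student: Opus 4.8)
The plan is to build, for arbitrary finite $F \subseteq G$, $E \subseteq X$, and $\epsilon > 0$, a single finite quotient of $G$ out of which both the approximating map $\varphi$ and the local coset data $\pi_s$ are read off, so that $\varphi$ is an \emph{exact} homomorphism and the orbit approximation holds on all of $A$. First I would decompose: $E$ meets only finitely many orbits $O_1, \dots, O_k$, and each $O_j$ is $G$-equivariantly identified with $G/H_j$, where $H_j$ is the stabilizer of a chosen base point. By hypothesis $H_j = C_G(h_j)$ for some $h_j \in G$, and under this identification the action of $g$ sends the coset $bH_j$ to $gbH_j$. For each $x \in E \cap O_j$ I fix a representative $b_x \in G$ with $x \leftrightarrow b_x H_j$.

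Next I would choose, using residual finiteness, a finite-index normal subgroup $N \trianglelefteq G$ with quotient $\pi : G \to Q := G/N$, set $A := Q$, and let $\varphi := \lambda \circ \pi$, where $\lambda : Q \to \Sym(Q)$ is the left-regular representation. Since $\varphi$ is a genuine group homomorphism it is automatically unital and satisfies $d(\varphi(gh), \varphi(g)\varphi(h)) = 0$, so the multiplicativity requirement is met exactly (note that no injectivity of $\pi$ on $F$ is even needed). For the orbit approximation I would take $S := A = Q$, so $|S| = |A|$, let $B := \bigsqcup_{j=1}^k Q/\pi(H_j)$, and for $s \in Q$ and $x \in E \cap O_j$ define
\[
\pi_s(x) := s^{-1}\pi(b_x)\,\pi(H_j) \in Q/\pi(H_j).
\]
The required identity $\pi_{\varphi(g)s}(x) = \pi_s(\alpha(g^{-1})x)$ then holds exactly: unwinding $\varphi(g)s = \pi(g)s$ gives $\pi_{\varphi(g)s}(x) = s^{-1}\pi(g^{-1}b_x)\,\pi(H_j)$, while if $y = \alpha(g^{-1})x \in E$ then $\pi_s(y) = s^{-1}\pi(b_y)\,\pi(H_j)$, and these agree because the chosen representatives satisfy $b_y^{-1}g^{-1}b_x \in H_j$. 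Since different orbits land in different summands of $B$, only the within-orbit behaviour needs care.

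The step I expect to be the main obstacle is arranging that each $\pi_s$ is \emph{injective} on $E$; by the cancellation above this is independent of $s$ and, within $O_j$, amounts to ensuring $\pi(w) \notin \pi(H_j)$ for every $w = b_x^{-1}b_{x'}$ with $x \neq x'$ in $E \cap O_j$. Here the centralizer hypothesis is exactly what is needed: since $\pi(C_G(h_j)) \subseteq C_Q(\pi(h_j))$, the membership $\pi(w) \in \pi(H_j)$ would force $\pi(w)$ to commute with $\pi(h_j)$, i.e.\ $\pi([w, h_j]) = 1$. As $x \neq x'$ gives $w \notin C_G(h_j)$ and hence $[w, h_j] \neq 1$, residual finiteness lets me choose $N$ to avoid the finitely many nontrivial elements $[w, h_j]$, guaranteeing $\pi([w,h_j]) \neq 1$ and thus $\pi(w) \notin \pi(H_j)$. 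With this choice of $N$ all three conditions hold (the first two exactly, the third with $S = A$), so $\alpha$ is sofic. I would close with the remark that this is the same mechanism underlying the Main Theorem: what makes the construction run is that the stabilizers are separable in the profinite topology, which for centralizers follows in any residually finite group from the commutator criterion above.
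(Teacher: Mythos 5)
Your argument is correct, but it takes a genuinely different route from the paper's. The paper obtains this statement as Corollary \ref{conj-of-res-fin}: it first proves Proposition \ref{left-right-of-res-fin}, that the two-sided multiplication action of $G \times G$ on $G$ is sofic, by exhibiting its stabilizers (the twisted diagonals $\{(h, g^{-1}hg)\}$) as intersections of decreasing sequences of finite-index subgroups and invoking Proposition \ref{chabouty-closed}; an orbit with stabilizer $C_G(h)$ is then a restriction of that action to the diagonal copy of $G$, and the restriction and orbit-decomposition results of [GKEP24] (Propositions 2.15 and 2.16) finish the proof. You instead give a direct, self-contained construction: one finite quotient $Q = G/N$ supplies an honest homomorphism $\varphi = \lambda \circ \pi$ together with explicit injections $\pi_s$ into $\bigsqcup_j Q/\pi(H_j)$, with $S = A$, so all three conditions hold exactly. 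The mechanism underlying both arguments is the same, as you note at the end: centralizers in a residually finite group are separable in the profinite topology, detected by the criterion $w \in C_G(h) \Leftrightarrow [w,h] = 1$; this is exactly what makes the paper's stabilizers Chabouty limits of finite-index subgroups, and exactly what makes your $\pi_s$ injective. What the paper's route buys is reusable machinery (Proposition \ref{chabouty-closed} also drives the LERF and LERA results); what your route buys is brevity, independence from the $G \times G$ detour and the external citations, and the explicit stronger conclusion --- parallel to the paper's observation for LERF groups --- that $\varphi$ may be taken to be a genuine homomorphism with no exceptional set, i.e., $\epsilon = 0$.
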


\textbf{Acknowledgements.} The author thanks Andreas Thom and anonymous referees for helpful comments and suggestions.

\section{Proof of Main Theorem}

To prove the main theorem, we shall prove the following technical result, which, in the present generality, is suggested by Andreas Thom. The author thanks him for his suggestion. Recall that the Chabouty topology on the collection of subgroups of $G$ is defined as the topology of pointwise convergence, i.e., subgroups $H_i \leq G$ converges to $H \leq G$ if the indicator functions $1_{H_i}$ converges to $1_H$ pointwise, as functions from $G$ to $\{0, 1\}$.

\begin{prop}\label{chabouty-closed}
The stabilizers of transitive sofic actions are closed in the Chabouty topology, i.e., if $H_i \leq G$ are s.t. the left multiplication action $\alpha_i: G \curvearrowright G/H_i$ is sofic for all $i$, and $H_i \to H$ in the Chabouty topology, then the left multiplication action $\alpha: G \curvearrowright G/H$ is sofic.
\end{prop}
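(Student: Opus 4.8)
The plan is to exploit the fact that soficity of the transitive action $G \curvearrowright G/H$ is controlled entirely by the finite \emph{local} combinatorics of $H$ — precisely the data that Chabouty convergence stabilizes. Concretely, given target parameters $F$, $E$, $\epsilon$, I would manufacture the required approximation of $\alpha$ by running the soficity of $\alpha_i$ for a single, sufficiently large $i$, after matching a finite window of $G/H$ with the corresponding window of $G/H_i$ via an $F$-equivariant bijection. As a harmless preliminary reduction, I would replace $F$ by $F \cup F^{-1} \cup \{1_G\}$: enlarging $F$ only strengthens unitality, $(F,\epsilon)$-multiplicativity, and the orbit-approximation condition, so it suffices to treat symmetric $F$ containing the identity.

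Fix such an $F$, a finite $E = \{g_1 H, \dots, g_n H\} \subseteq G/H$ with chosen representatives $g_j \in G$ (the cosets being pairwise distinct), and $\epsilon > 0$. I would form the finite test set $K = \{\, g_k^{-1} g\, g_j : g \in F,\ 1 \le j,k \le n \,\} \subseteq G$, which collects every group element whose $H$-membership the argument will ever consult. By Chabouty convergence there is an $N$ with $H_i \cap K = H \cap K$ for all $i \ge N$; I fix one such $i$. Two facts then follow purely by checking membership in $K$, where $H$ and $H_i$ agree. First, the cosets $g_1 H_i, \dots, g_n H_i$ are pairwise distinct, since $g_k^{-1} g_j \in K$ forces $g_j H_i = g_k H_i \iff g_j H = g_k H$; hence $E_i := \{g_j H_i\}$ has cardinality $n$ and $\theta_i \colon g_j H \mapsto g_j H_i$ is a well-defined bijection $E \to E_i$. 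Second — and this is the delicate point — $\theta_i$ intertwines the two partial actions on the relevant domain: whenever $g \in F$, $x = g_j H \in E$, and $\alpha(g^{-1})x = g^{-1} g_j H = g_k H \in E$, one has $g_k^{-1} g^{-1} g_j \in K$ (using $g^{-1} \in F$), so $g_k^{-1} g^{-1} g_j \in H \iff \in H_i$, giving $\alpha_i(g^{-1})\theta_i(x) = g^{-1} g_j H_i = g_k H_i = \theta_i(\alpha(g^{-1})x)$.

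With the equivariant window in hand, I would invoke soficity of $\alpha_i$ applied to $(F, E_i, \epsilon)$ to obtain a finite set $A$, a unital, $(F,\epsilon)$-multiplicative map $\varphi \colon G \to \Sym(A)$, a finite set $B$, a subset $S \subseteq A$ with $|S| > (1-\epsilon)|A|$, and injections $\pi_s \colon E_i \hookrightarrow B$ satisfying the orbit-approximation identity for $\alpha_i$. I would then \emph{reuse the same} $\varphi$, $B$, and $S$, and set $\pi'_s := \pi_s \circ \theta_i \colon E \hookrightarrow B$, which remain injective. Unitality and multiplicativity are untouched since they never reference the action; and for $s \in S$, $g \in F$, $x \in E$ with $\varphi(g)s \in S$ and $\alpha(g^{-1})x \in E$, the equivariance of $\theta_i$ yields $\alpha_i(g^{-1})\theta_i(x) = \theta_i(\alpha(g^{-1})x) \in E_i$, whence $\pi'_{\varphi(g)s}(x) = \pi_{\varphi(g)s}(\theta_i(x)) = \pi_s(\alpha_i(g^{-1})\theta_i(x)) = \pi_s(\theta_i(\alpha(g^{-1})x)) = \pi'_s(\alpha(g^{-1})x)$. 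Thus $\varphi$ is an $(F,E,\epsilon)$-orbit approximation of $\alpha$, and $\alpha$ is sofic.

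I do not expect a genuine analytic obstacle here; the whole content is the observation that soficity of a transitive action reads off only finitely much membership data of the stabilizer. The one step demanding care is the bookkeeping that pins down the test set $K$ so that it provably contains \emph{every} element whose $H$-versus-$H_i$ membership is used — both in proving $\theta_i$ is a bijection and, more subtly, in proving its equivariance with the inverse $g^{-1}$ appearing in the orbit-approximation identity (this is exactly why symmetrizing $F$ at the outset is needed). Once $K$ is correctly specified, the transfer of the approximation along $\theta_i$ is routine.
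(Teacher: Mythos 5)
Your proposal is correct and follows essentially the same strategy as the paper: use Chabouty convergence to choose $i$ large enough that $H_i$ and $H$ agree on the finite set of group elements $\sigma(y)^{-1}g\sigma(x)$ determined by $F$ and lifts of $E$, then transfer a sofic approximation of $\alpha_i$ back to $\alpha$ along the induced identification $gH \mapsto gH_i$ of the finite windows. The only cosmetic differences are that the paper splits your test set $K$ into a part disjoint from $H$ (for injectivity) and a part contained in $H$ (for equivariance), and phrases your bijection $\theta_i$ as $q\circ\sigma$ for a global lift $\sigma: G/H \to G$.
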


\begin{proof} Let $F \subseteq G$ and $E \subseteq G/H$ be finite subsets. Let $\epsilon > 0$. Fix a lifting map $\sigma: G/H \rightarrow G$. Then we may consider the collection $\{\sigma(x)^{-1}\sigma(y): x \neq y \in E\}$. This is a finite set that does not intersect $H$, so for large $i$, $H_i$ does not intersect it. We may also consider the collection $\{\sigma(\alpha(g^{-1})x)^{-1}g^{-1}\sigma(x): x \in E, g \in F\}$. One easily observes that this is a finite subset of $H$, so for large $i$ it must be contained in $H_i$. Thus, we may fix an $i$ s.t.,
\begin{equation*}
\begin{split}
    \{\sigma(x)^{-1}\sigma(y): x \neq y \in E\} \cap H_i &= \varnothing\\
    \{\sigma(\alpha(g^{-1})x)^{-1}g^{-1}\sigma(x): x \in E, g \in F\} &\subseteq H_i
\end{split}
\end{equation*}

Now, let $q: G \rightarrow G/H_i$ be the natural quotient map. As the left multiplication action $\alpha_i: G \curvearrowright G/H_i$ is sofic, there exists $\varphi: G \rightarrow \Sym(A)$ that is a unital, $(F, \epsilon)$-multiplicative, $(F, q \circ \sigma(E), \epsilon)$-orbit approximation of $\alpha_i$. We claim it is also an $(F, E, \epsilon)$-orbit approximation of $\alpha$. Indeed, by definition there exists a finite set $B$, a subset $S \subseteq A$ with $|S| > (1 - \epsilon)|A|$, and for each $s \in S$, an injective map $\pi'_s: q \circ \sigma(E) \hookrightarrow B$ s.t. $\pi'_{\varphi(g)s}(x) = \pi'_s(\alpha_i(g^{-1})x)$ for all $s \in S$, $g \in F$, $x \in q \circ \sigma(E)$, whenever $\varphi(g)s \in S$ and $\alpha_i(g^{-1})x \in q \circ \sigma(E)$. Define $\pi_s(x) = \pi'_s(q \circ \sigma(x))$ for $s \in S$, $x \in E$. Then, if $s \in S$, $g \in F$, $x \in E$, $\varphi(g)s \in S$, and $\alpha(g^{-1})x \in E$, we have $q \circ \sigma(x) \in q \circ \sigma(E)$. Furthermore, $\alpha_i(g^{-1})q \circ \sigma(x) = q(g^{-1}\sigma(x)) = g^{-1}\sigma(x)H_i$. Since $\sigma(\alpha(g^{-1})x)^{-1}g^{-1}\sigma(x) \in H_i$ by the choice of $H_i$, we have $\alpha_i(g^{-1})q \circ \sigma(x) = \sigma(\alpha(g^{-1})x)H_i = q \circ \sigma(\alpha(g^{-1})x) \in q \circ \sigma(E)$. Hence,
\begin{equation*}
\begin{split}
    \pi_{\varphi(g)s}(x) &= \pi'_{\varphi(g)s}(q \circ \sigma(x))\\
    &= \pi'_s(q \circ \sigma(\alpha(g^{-1})x))\\
    &= \pi_s(\alpha(g^{-1})x)
\end{split}
\end{equation*}

Finally, we observe that, if $\pi_s(x) = \pi_s(y)$ for some $s \in S$ and $x, y \in E$, then by injectivity of $\pi'_s$ we have $\sigma(x)H_i = q \circ \sigma(x) = q \circ \sigma(y) = \sigma(y)H_i$, i.e., $\sigma(x)^{-1}\sigma(y) \in H_i$. By the choice of $H_i$, we have $x = y$, so $\pi_s$ is injective.
\end{proof}

\begin{cor}\label{LERF-cor}
If $G$ is LERF, then all actions of $G$ are sofic.
\end{cor}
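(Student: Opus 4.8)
The plan is to reduce the statement to transitive coset actions and then, granting Proposition \ref{chabouty-closed}, to realize an arbitrary stabilizer as a Chabouty limit of finite-index subgroups in two stages. First I would reduce to the transitive case. An arbitrary action $\alpha\colon G \curvearrowright X$ is the disjoint union of its orbits, each equivariantly isomorphic to a coset action $G \curvearrowright G/H$ with $H$ a point stabilizer. Given finite $F \subseteq G$, $E \subseteq X$ and $\epsilon > 0$, the set $E$ meets only finitely many orbits $O_1, \dots, O_n$; setting $E_k = E \cap O_k$ and choosing unital, $(F, \epsilon/n)$-multiplicative, $(F, E_k, \epsilon/n)$-orbit approximations $\varphi_k \colon G \to \Sym(A_k)$ of $\alpha|_{O_k}$, I would form the product $\varphi = \prod_k \varphi_k$ on $A = \prod_k A_k$. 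A union bound shows $\varphi$ is unital and $(F,\epsilon)$-multiplicative, while $S = \prod_k S_k$ (so that $|S| > (1-\epsilon)|A|$ by Bernoulli's inequality) together with $\pi_{(s_1,\dots,s_n)}(x) = (\pi^k_{s_k}(x),k)$ for $x \in E_k$ gives an $(F,E,\epsilon)$-orbit approximation of $\alpha$. Hence it suffices to prove $G \curvearrowright G/H$ is sofic for every subgroup $H \leq G$.

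For the base case I would observe that a finite-index subgroup $H$ gives a genuinely sofic action: letting $N \trianglelefteq G$ be the (finite-index) normal core of $H$, the honest permutation representation $\varphi \colon G \to \Sym(G/N)$ is unital and exactly multiplicative, and with $S = G/N$, $B = G/H$, and $\pi_{kN}(\ell H) = k^{-1}\ell H$ --- well defined because $N \subseteq H$ is normal --- the orbit-approximation identity $\pi_{\varphi(g)s}(x) = \pi_s(\alpha(g^{-1})x)$ holds exactly and each $\pi_{kN}$ is injective. Thus $G \curvearrowright G/H$ is sofic whenever $[G:H] < \infty$.

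It remains to pass from finite-index to arbitrary subgroups, and here Proposition \ref{chabouty-closed} does the work, applied twice. If $H$ is finitely generated, the LERF hypothesis furnishes a decreasing sequence of finite-index subgroups $H_i$ with $\bigcap_i H_i = H$; being decreasing with intersection $H$, they satisfy $1_{H_i} \to 1_H$ pointwise, i.e.\ $H_i \to H$ in the Chabouty topology, so Proposition \ref{chabouty-closed} and the base case give that $G \curvearrowright G/H$ is sofic. For a general $H \leq G$, I would use countability of $G$ to enumerate $H = \{h_1, h_2, \dots\}$ and set $K_j = \langle h_1, \dots, h_j\rangle$; these form an increasing sequence of finitely generated subgroups with $\bigcup_j K_j = H$, hence $K_j \to H$ in the Chabouty topology. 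Each $G \curvearrowright G/K_j$ is sofic by the finitely generated case, so a second application of Proposition \ref{chabouty-closed} shows $G \curvearrowright G/H$ is sofic, completing the argument.

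The main obstacle is conceptual rather than computational: the LERF hypothesis constrains only finitely generated subgroups, whereas a general stabilizer need not be finitely generated, so one cannot directly write it as an intersection of finite-index subgroups. This is exactly why the argument is two-staged --- approximating an arbitrary $H$ from below by finitely generated $K_j$, and only then each $K_j$ from above by finite-index subgroups --- with Proposition \ref{chabouty-closed} invoked once in each direction. The points requiring care are that both the decreasing-intersection and the increasing-union configurations genuinely converge in the Chabouty topology (both follow immediately from the definition, using that the $H_i$ decrease and that the $K_j$ are contained in $H$), and that the product construction in the reduction step preserves unitality, multiplicativity, and the orbit-approximation property simultaneously.
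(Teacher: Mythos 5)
Your proof is correct and follows essentially the same route as the paper: reduce to transitive coset actions, verify the finite-index case directly, and apply Proposition \ref{chabouty-closed} twice --- once to pass from finitely generated subgroups to arbitrary ones via an increasing union, and once to pass from finite-index subgroups to a finitely generated $H$ via the decreasing intersection supplied by LERF. The only differences are cosmetic: the paper cites [GKEP24, Proposition 2.16] for the orbit decomposition rather than carrying out the product construction, and its explicit model for the finite-index base case uses $A = \Sym(G/H_i)$ with $\pi_s(x) = s^{-1}x$ instead of your normal-core construction.
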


\begin{proof} By Proposition 2.16 of [GKEP24], it suffices to consider transitive actions of $G$, i.e., it suffices to consider left multiplication actions $\alpha: G \curvearrowright G/H$ for subgroups $H$ of $G$. By Proposition \ref{chabouty-closed}, we may assume $H$ is finitely generated. As $G$ is LERF, there exists a decreasing sequence of finite index subgroups $H_i \leq G$ s.t. $H = \cap_i H_i$. It is easy to see that the left multiplication action of $G$ on $G/H_i$ is sofic for all $i$. (Take $A = \Sym(G/H_i)$, $\varphi: G \to \Sym(A) = \Sym(\Sym(G/H_i))$ be the map induced by the left multiplication action of $G$ on $G/H_i$, $S = A$, $B = G/H_i$, and $\pi_s(x) = s^{-1}x$.) Then applying Proposition \ref{chabouty-closed} again, we have $\alpha: G \curvearrowright G/H$ is sofic, whence the claim is proved.
\end{proof}

\begin{rem}\label{inc-union}
Applying item 4 of Proposition 2.15 of [GKEP24], we see that if $G$ is the union of an increasing sequence of LERF groups, then all its actions are sofic as well.
\end{rem}

We observe that, as applied to finite index $H_i \leq G$, the proof that $G \curvearrowright G/H_i$ is sofic contained in the proof of Corollary \ref{LERF-cor} shows that $\epsilon$ may be taken to be 0 in the definition of sofic actions and that $\varphi$ may be furthermore taken to be an actual homomorphism. Since $\epsilon = 0$ and $\varphi$ being a homomorphism are both preserved in the proof of Proposition \ref{chabouty-closed}, and the same holds in the proof of Proposition 2.16 of [GKEP24], we observe that $\epsilon = 0$ and $\varphi$ being a homomorphism may be assumed whenever $G$ is LERF, i.e.,

\begin{prop}
Let $G$ be a LERF group, $X$ be a countable discrete set, $\alpha: G \curvearrowright X$ be an action. Then for all finite subsets $F \subseteq G$, $E \subseteq X$, there exists a finite set $A$ and a group homomorphism $\varphi: G \rightarrow \Sym(A)$ which satisfies the condition that, for each $a \in A$ there is an injective map $\pi_a: E \hookrightarrow B$ s.t. $\pi_{\varphi(g)a}(x) = \pi_a(\alpha(g^{-1})x)$ for all $a \in A$, $g \in F$, $x \in E$, whenever $\alpha(g^{-1})x \in E$.
\end{prop}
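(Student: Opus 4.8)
The plan is to observe that the assertion is precisely the ``$\epsilon = 0$, $\varphi$ a homomorphism'' strengthening of Corollary \ref{LERF-cor}, and to obtain it by retracing the proof of that corollary while checking at each stage that these two features are first created and then preserved. Note that the stated condition is exactly the $(F, E, \epsilon)$-orbit approximation condition read with $S = A$ (which is what ``$\epsilon = 0$'' must mean, since the strict inequality $|S| > (1 - \epsilon)|A|$ degenerates to $S = A$), together with $\varphi$ being an honest homomorphism (which forces exact multiplicativity). So it suffices to produce, for every finite $F$ and $E$, a homomorphism $\varphi$ together with injective maps $\pi_a$ defined for \emph{all} $a \in A$ satisfying the exact intertwining relation.

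I would first treat the base case of a finite index subgroup $H_i \le G$, for which the construction is already explicit in the proof of Corollary \ref{LERF-cor}. Taking $A = \Sym(G/H_i)$ and letting $\varphi: G \to \Sym(A)$ be induced by left multiplication of $G$ on $G/H_i$, the map $\varphi$ is a genuine homomorphism. Setting $S = A$, $B = G/H_i$, and $\pi_s(x) = s^{-1}x$, each $\pi_s$ is injective, and the intertwining relation
\begin{equation*}
\pi_{\varphi(g)s}(x) = (\varphi(g)s)^{-1}x = s^{-1}g^{-1}x = \pi_s(\alpha_i(g^{-1})x)
\end{equation*}
holds exactly, for every $s \in A$. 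Thus both desired features are present at the base case.

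Next I would propagate these features through the two applications of Proposition \ref{chabouty-closed} used in Corollary \ref{LERF-cor} --- first reducing an arbitrary subgroup to the finitely generated subgroups it exhausts, then passing from the finite index approximants $H_i$ to $H = \cap_i H_i$. The key point is that the proof of Proposition \ref{chabouty-closed} reuses the \emph{same} map $\varphi$ supplied by the soficity of $G/H_i$, and merely redefines the injective maps by precomposition, $\pi_s(x) = \pi'_s(q \circ \sigma(x))$. Consequently, if the input $\varphi$ is a homomorphism then so is the output, and if the input has $S = A$ then the output maps $\pi_s$ are defined for all $s \in A$ and inherit the exact intertwining; no $\epsilon$ is spent. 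The same inspection applies to the reduction to transitive actions (Proposition 2.16 of [GKEP24]), which assembles the approximation of a general action from those of its transitive components via products or disjoint unions of the relevant symmetric groups --- operations under which being a homomorphism remains stable and $S = A$ is preserved.

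The only real obstacle is bookkeeping: one must confirm that Proposition 2.16 of [GKEP24] genuinely preserves both features, since its proof is not reproduced here. I expect this to be routine once one checks that the combination of approximations used there is a product (or union) construction on the permutation groups, under which exactness and the homomorphism property are manifestly stable. With that verified, chaining the base case through the two invocations of Proposition \ref{chabouty-closed} and the transitive reduction yields the statement.
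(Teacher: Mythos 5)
Your proposal is correct and follows essentially the same route as the paper: the paper likewise derives this proposition by observing that the explicit finite-index construction in Corollary \ref{LERF-cor} already yields a homomorphism with $S = A$, and that both features are preserved through the applications of Proposition \ref{chabouty-closed} (which only precomposes the maps $\pi_s$ without altering $\varphi$) and through the reduction to transitive actions in Proposition 2.16 of [GKEP24]. Your explicit verification of the intertwining relation at the base case and your flagging of the one unverified bookkeeping step (that Proposition 2.16 of [GKEP24] preserves exactness and the homomorphism property) match the paper's level of detail.
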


One natural question to ask is whether the above characterizes LERF groups, at least in the finitely generated case, i.e.,

\begin{question}
If $G$ is a finitely generated group that satisfies the conclusion of the above proposition, is $G$ necessarily LERF?
\end{question}

This concludes the proof of the main theorem. In a similar vein, we also have the following generalization:

\begin{defn}
Let $H \leq G$ be an inclusion of groups. We say $H$ is \textit{strongly co-amenable in }$G$ if the normal core $\cap_{g \in G} (gHg^{-1})$ is co-amenable in $G$, i.e., if the quotient group $G/[\cap_{g \in G} (gHg^{-1})]$ is amenable. We say that a group $G$ is \textit{locally extended residually amenable (LERA)} if for any finitely generated subgroup $H \leq G$, there exists a decreasing sequence of strongly co-amenable subgroups $H_i \leq G$ s.t. $H = \cap_i H_i$.
\end{defn}

If $H \leq G$ is strongly co-amenable, then the left multiplication action $\alpha: G \curvearrowright G/H$ quotients through the amenable group $G/[\cap_{g \in G} (gHg^{-1})]$. Thus, $\alpha$ is sofic by item 1 of Proposition 2.15 and Theorem 2.17 of [GKEP24]. Therefore, by the same argument as Corollary \ref{LERF-cor}, we have,

\begin{cor}\label{LERA-cor}
If $G$ is LERA, then all actions of $G$ are sofic. Similar to Remark \ref{inc-union}, the same holds if $G$ is an increasing union of LERA groups.
\end{cor}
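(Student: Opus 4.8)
The plan is to run the proof of Corollary \ref{LERF-cor} essentially verbatim, changing only the step that verifies soficity of the left multiplication action on each finite-stage quotient. First I would apply Proposition 2.16 of [GKEP24] to reduce to transitive actions, so that it suffices to treat left multiplication actions $\alpha: G \curvearrowright G/H$ for subgroups $H \leq G$. Next, since Proposition \ref{chabouty-closed} shows that stabilizers of transitive sofic actions are closed in the Chabouty topology, and since every subgroup $H$ is the Chabouty limit of an increasing sequence of finitely generated subgroups, it suffices to treat finitely generated $H$. For such $H$, the LERA hypothesis furnishes a decreasing sequence of strongly co-amenable subgroups $H_i \leq G$ with $H = \cap_i H_i$; as a decreasing intersection, $H_i \to H$ in the Chabouty topology. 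Hence, once each $\alpha_i: G \curvearrowright G/H_i$ is shown to be sofic, a final application of Proposition \ref{chabouty-closed} delivers soficity of $\alpha$.

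The one new ingredient is the soficity of $\alpha_i$, which is exactly the observation recorded in the paragraph preceding the statement. Writing $N_i = \cap_{g \in G}(g H_i g^{-1})$ for the normal core of $H_i$, strong co-amenability means the quotient $G/N_i$ is amenable. Since $N_i$ is normal in $G$ and contained in $H_i$, it acts trivially on $G/H_i$ under left multiplication, so $\alpha_i$ factors as a genuine action of the amenable group $G/N_i$ on $G/H_i$. By Theorem 2.17 of [GKEP24] every action of an amenable group is sofic, and by item 1 of Proposition 2.15 of [GKEP24] soficity passes back to the pullback action of $G$; thus $\alpha_i$ is sofic, completing the first assertion.

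For the increasing-union statement I would argue exactly as in Remark \ref{inc-union}: if $G$ is the increasing union of LERA subgroups $G_n$, then by the first part all actions of each $G_n$ are sofic, and item 4 of Proposition 2.15 of [GKEP24] promotes this to soficity of all actions of $G$.

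I do not expect a substantial obstacle, since the entire analytic difficulty of the LERF case, namely the Chabouty-closedness of sofic stabilizers, is already packaged in Proposition \ref{chabouty-closed}, and the amenable-quotient argument supplies soficity at each stage cleanly. The only point requiring mild care is confirming that $\alpha_i$ factors through $G/N_i$ as an action rather than merely as a set map, which holds precisely because $N_i$, being the normal core, satisfies $N_i \subseteq k H_i k^{-1}$ for every $k \in G$ and therefore fixes every coset $k H_i$.
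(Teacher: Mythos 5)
Your proposal is correct and follows the paper's argument essentially verbatim: the paper likewise establishes soficity of each $G \curvearrowright G/H_i$ by factoring through the amenable quotient by the normal core (via Theorem 2.17 and item 1 of Proposition 2.15 of [GKEP24]) and then repeats the proof of Corollary \ref{LERF-cor}, with the increasing-union case handled by item 4 of Proposition 2.15 exactly as in Remark \ref{inc-union}. No gaps.
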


Since the normal core of a finite index subgroup is of finite index, LERF groups are LERA. Since quotients of amenable groups are amenable, amenable groups are also LERA. Thus, the above simultaneously generalizes the main theorem of this paper and Theorem 2.17 of [GKEP24]. LERA groups therefore encompass all known examples of finitely generated groups which have all their actions sofic. (For infinitely generated groups, there is the possibility of taking increasing unions.) This naturally yields the following question:

\begin{question}
If $G$ is a finitely generated group s.t. all its actions are sofic, is $G$ necessarily LERA?
\end{question}

However, we remark here that there are currently no known examples of non-sofic actions, unless one assumes the existence of non-sofic groups, in which case the left multiplication action of a non-sofic group on itself is non-sofic (see Theorem 2.12 of [GKEP24]).

We also observe that LERA groups are residually amenable, so in particular sofic and hyperlinear. Hence, by applying Theorems 3.6, 3.7, 3.8, and 3.9 of [GKEP24],

\begin{cor}
Let $G$ be a LERA group. Let $\alpha: G \curvearrowright X$ be an action on a set. Then,
\begin{enumerate}
    \item If $H$ is a sofic group, then the generalized wreath product $H \wr_\alpha G$ as well as the free generalized wreath product $H \wr^\ast_\alpha G$ is sofic.
    \item If $M$ is a Connes-embeddable tracial von Neumann algebra, then the generalized wreath product $M \wr_\alpha G$ as well as the free generalized wreath product $M \wr^\ast_\alpha G$ is Connes-embeddable.
\end{enumerate}
\end{cor}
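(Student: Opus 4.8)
The plan is to obtain this corollary as a formal consequence of the four (free) generalized wreath product theorems of [GKEP24], so that the only real work is to check that their hypotheses hold in the present setting. Each of Theorems 3.6--3.9 of [GKEP24] takes as input an approximation property of the base group $G$, soficity of the action $\alpha$, and the corresponding approximation property of the fiber (soficity of $H$ for item 1, Connes-embeddability of $M$ for item 2), and concludes the asserted property of $H \wr_\alpha G$, $H \wr^\ast_\alpha G$, $M \wr_\alpha G$, or $M \wr^\ast_\alpha G$. Thus the argument is to be carried out verbatim parallel to the LERF corollary stated after the Main Theorem, with ``residually finite'' everywhere replaced by ``residually amenable.''

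First I would verify that every LERA group $G$ is residually amenable. Applying the LERA condition to the finitely generated (indeed trivial) subgroup $\{1_G\}$ yields a decreasing sequence of strongly co-amenable subgroups $H_i \leq G$ with $\cap_i H_i = \{1_G\}$. Writing $N_i = \cap_{g \in G}(g H_i g^{-1})$ for the normal core of $H_i$, strong co-amenability says precisely that each quotient $G/N_i$ is amenable, while $N_i \subseteq H_i$ forces $\cap_i N_i \subseteq \cap_i H_i = \{1_G\}$. Hence for every nontrivial $g \in G$ there is an $i$ with $g \notin N_i$ and $G/N_i$ amenable, so $G$ is residually amenable. Since amenable groups are sofic and soficity (respectively hyperlinearity) passes to residually sofic (respectively residually hyperlinear) groups, $G$ is both sofic and hyperlinear.

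Next I would supply soficity of the action: by Corollary \ref{LERA-cor}, every action of the LERA group $G$ on a set is sofic, so in particular the given $\alpha : G \curvearrowright X$ is sofic. With $G$ sofic, $\alpha$ sofic, and $H$ sofic, Theorems 3.6 and 3.7 of [GKEP24] then give that $H \wr_\alpha G$ and $H \wr^\ast_\alpha G$ are sofic, proving item 1; with $G$ hyperlinear, $\alpha$ sofic, and $M$ Connes-embeddable, Theorems 3.8 and 3.9 of [GKEP24] give that $M \wr_\alpha G$ and $M \wr^\ast_\alpha G$ are Connes-embeddable, proving item 2.

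I do not anticipate a genuine obstacle here: the entire content is the bookkeeping of matching hypotheses to the cited theorems, and the two facts that require justification---residual amenability of $G$ and soficity of $\alpha$---are each immediate, the former from taking normal cores of the strongly co-amenable subgroups witnessing LERA applied to $\{1_G\}$, and the latter directly from Corollary \ref{LERA-cor}. The closest thing to a subtlety is confirming that the approximation hypothesis demanded of $G$ by each cited theorem (soficity for the group-theoretic statements, hyperlinearity or Connes-embeddability for the von Neumann algebraic statements) is exactly the one furnished by residual amenability, which it is.
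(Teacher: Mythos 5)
Your proposal is correct and follows essentially the same route as the paper: observe that LERA implies residually amenable (hence $G$ is sofic and hyperlinear), invoke Corollary \ref{LERA-cor} for soficity of $\alpha$, and then apply Theorems 3.6--3.9 of [GKEP24]. The only difference is that you spell out the normal-core argument for residual amenability, which the paper states without proof.
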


Finally, we present the following additional application of Proposition \ref{chabouty-closed}:

\begin{prop}\label{left-right-of-res-fin}
Let $G$ be a residually finite group, then the action $\alpha: G \times G \curvearrowright G$, where the two copies of $G$ act via left multiplication and right multiplication, respectively, is sofic.
\end{prop}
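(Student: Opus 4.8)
The plan is to realize $\alpha$ as a transitive left multiplication action and to exhibit its stabilizer as a Chabouty limit of finite-index subgroups, so that Proposition \ref{chabouty-closed} applies directly. First I would record that $\alpha$ is the action $(g,h)\cdot x = gxh^{-1}$, which is indeed a left action of $G \times G$ on $G$, and that it is transitive since $(x,1)\cdot 1 = x$. The stabilizer of the base point $1 \in G$ is $\{(g,h) : gh^{-1} = 1\}$, i.e. the diagonal subgroup $\Delta = \{(g,g) : g \in G\} \leq G \times G$. Hence $\alpha$ is isomorphic, as a $(G\times G)$-action, to the left multiplication action of $G \times G$ on $(G\times G)/\Delta$, and it suffices to prove that the latter is sofic.

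Next, using residual finiteness of the countable group $G$, I would fix a decreasing sequence of finite-index normal subgroups $N_i \trianglelefteq G$ with $\cap_i N_i = \{1\}$. Letting $\pi_i : G \to G/N_i$ denote the quotient maps, I would set
\[
H_i = (\pi_i \times \pi_i)^{-1}\bigl(\text{diagonal of } (G/N_i)\times(G/N_i)\bigr) = \{(g,h) \in G \times G : g^{-1}h \in N_i\}.
\]
Being the preimage of a subgroup under a homomorphism, each $H_i$ is a subgroup of $G \times G$; its index equals the index of the diagonal in $(G/N_i)\times(G/N_i)$, namely $|G/N_i| < \infty$, so each $H_i$ has finite index. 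Since the $N_i$ are decreasing, so are the $H_i$, and $\cap_i H_i = \{(g,h) : g^{-1}h \in \cap_i N_i\} = \Delta$. A decreasing intersection converges in the Chabouty topology, so $H_i \to \Delta$.

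Finally, since each $H_i$ has finite index, the left multiplication action of $G\times G$ on the finite coset space $(G\times G)/H_i$ is sofic; this is precisely the elementary observation used in the proof of Corollary \ref{LERF-cor}, where one takes $\varphi$ to be the genuine permutation representation and $\epsilon = 0$. Applying Proposition \ref{chabouty-closed} to the convergence $H_i \to \Delta$ then shows that left multiplication on $(G\times G)/\Delta$, and hence $\alpha$, is sofic. I expect the only point requiring care to be the construction of $H_i$: the naive description $\{(g,h) : gh^{-1} \in N_i\}$ via the map $(g,h) \mapsto gh^{-1}$ is not a homomorphism when $G/N_i$ is nonabelian, so I would instead present $H_i$ as the preimage of the diagonal under $\pi_i \times \pi_i$ to see at once that it is a finite-index subgroup, noting that normality of $N_i$ reconciles the two descriptions since $g^{-1}h \in N_i \iff gh^{-1} \in N_i$.
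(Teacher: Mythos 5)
Your proof is correct and follows essentially the same route as the paper: identify the stabilizer of a point as a (twisted) diagonal, exhibit it as the decreasing intersection of the finite-index subgroups obtained as stabilizers of the induced $G\times G$-action on the finite quotients $G/N_i$, and invoke Proposition \ref{chabouty-closed}. The only cosmetic difference is that you use transitivity to reduce to the base point $1$ and the diagonal $\Delta$, whereas the paper writes down the stabilizer $\{(h,k): g^{-1}hgk^{-1}\in K_i\}$ at an arbitrary base point $g$; your added care about $H_i$ being the preimage of the diagonal under $\pi_i\times\pi_i$ is a nice touch but not a change of method.
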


\begin{proof} It suffices to show $G \times G$ is separable over stabilizers of $\alpha$, i.e., for any $g \in G$, there exists a decreasing sequence of finite index subgroups $H_i \leq G \times G$ s.t. $\mathrm{Stab}_\alpha(g) = \cap_i H_i$. Since $G$ is residually finite, we may take finite index normal subgroups $K_i \trianglelefteq G$ which decreases to $\{1_G\}$. $G \times G$ naturally acts on $G/K_i$ via left and right multiplication. Denote this action by $\alpha_i$ and set $H_i = \mathrm{Stab}_{\alpha_i}(gK_i) = \{(h, k) \in G \times G: g^{-1}hgk^{-1} \in K_i\}$. It is easy to see that $H_i$ satisfies the desired conditions.
\end{proof}

\begin{cor}\label{conj-of-res-fin}
Let $G$ be a residually finite group, then the conjugation action $\alpha: G \curvearrowright G$ is sofic. Furthermore, if $G$ is a residually finite group, $\alpha: G \curvearrowright X$ is an action on a set, and all stabilizers of $\alpha$ are centralizers of elements of $G$, then $\alpha$ is sofic.
\end{cor}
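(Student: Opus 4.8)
The plan is to recognize the conjugation action as a restriction of the left-right action from Proposition \ref{left-right-of-res-fin} and to exploit the fact that soficity descends along subgroup inclusions. The conjugation action $\alpha : G \curvearrowright G$ is exactly the pullback of the left-right action $\beta : G \times G \curvearrowright G$ along the diagonal homomorphism $G \hookrightarrow G \times G$, $g \mapsto (g,g)$, since the element $(g,g)$ sends $x$ to $gxg^{-1}$. Thus the first assertion reduces to the general principle that if $\beta : \Gamma \curvearrowright Y$ is sofic and $K \leq \Gamma$ is a subgroup, then $\beta|_K : K \curvearrowright Y$ is sofic.

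First I would verify this restriction principle, which is immediate from the definitions. Given finite $F \subseteq K$, finite $E \subseteq Y$, and $\epsilon > 0$, apply soficity of $\beta$ to the same data, viewing $F$ as a subset of $\Gamma$, to obtain $\varphi : \Gamma \to \Sym(A)$, and set $\psi := \varphi|_K$. Unitality survives because $1_K = 1_\Gamma$; the $(F,\epsilon)$-multiplicativity estimate is unchanged because products in $K$ agree with products in $\Gamma$; and the same $B$, $S$, and maps $\pi_s$ witness the $(F,E,\epsilon)$-orbit approximation, since the defining identities only refer to $\varphi(g)$ and $\beta(g^{-1})$ for $g \in F$. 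Combining this with Proposition \ref{left-right-of-res-fin} gives soficity of $\alpha$. The reason to route through $G \times G$ rather than attacking conjugation directly is that the transitive components of the conjugation action have centralizers $C_G(g)$ as stabilizers, and centralizers need not be separable in a residually finite group; hence the Chabouty argument of Proposition \ref{left-right-of-res-fin} cannot be applied to them directly, and the left-right action is precisely the device that circumvents this.

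For the second assertion I would use that soficity also passes to restrictions to invariant subsets: if $Y_0 \subseteq Y$ is $\Gamma$-invariant, the witnessing data for $\beta$ associated to a finite $E \subseteq Y_0$ serves verbatim for $\beta|_{Y_0}$. Now suppose $\alpha : G \curvearrowright X$ has every stabilizer equal to a centralizer. Each transitive component is $G$-equivariantly isomorphic to $G \curvearrowright G/C_G(g)$ for some $g \in G$, which is in turn isomorphic to the restriction of the conjugation action to the conjugacy class of $g$, an invariant subset whose point stabilizer is $C_G(g)$. By the first part this restriction is sofic, so every transitive component of $\alpha$ is sofic, and Proposition 2.16 of [GKEP24] then yields that $\alpha$ is sofic. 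I expect the only real content to be the opening observation that conjugation is the diagonal restriction of the left-right action; once that is in hand --- and noting that centralizers themselves need not be separable, so that a direct Chabouty attack is unavailable --- every remaining step is a mechanical check against the definitions together with appeals to Proposition \ref{left-right-of-res-fin} and Proposition 2.16 of [GKEP24].
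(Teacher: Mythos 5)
Your proposal is correct and follows essentially the same route as the paper: the paper likewise observes that the conjugation action (and, via transitive decomposition, any action with centralizer stabilizers) is a restriction of the left-right action of $G \times G$ on $G$, and then invokes Proposition \ref{left-right-of-res-fin} together with Propositions 2.15 and 2.16 of [GKEP24], which supply exactly the restriction-to-subgroups, restriction-to-invariant-subsets, and transitive-decomposition principles you verify by hand.
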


\begin{proof} The actions involved are all restrictions of the left and right multiplication action of $G \times G$ on $G$, so the result follows immediately from Proposition \ref{left-right-of-res-fin} and Propositions 2.15 and 2.16 of [GKEP24].
\end{proof}

\newpage

\begin{center}
    \textsc{References}
\end{center}

\medskip

\leftskip 0.25in
\parindent -0.25in
[Ago13] Agol, I. \textit{The virtual Haken conjecture}, with an appendix by Agol, I., Groves, D., and Manning, J. Documenta Mathematica. 18 (2013), pp. 1045-1087.

\smallskip

[Bur71] Burns, R.G. \textit{On finitely generated subgroups of free products}. Journal of the Australian Mathematical Society. 12 (1971), pp. 358-364.

\smallskip

[EL24] Emiliano de Almeida, K. and Lima, I. \textit{Subgroup separability of Artin groups II}. arXiv preprint, arXiv: 2403.05483, 2024.

\smallskip

[GKEP24] Gao, D., Kunnawalkam Elayavalli, S., and Patchell, G. \textit{Soficity for group actions on sets and applications}. arXiv preprint, arXiv: 2401.04945, 2024.

\smallskip

[Hall49] Hall, M. \textit{Coset representations in free groups}. Transactions of the American Mathematical Society. 67 (1949), pp. 421-432.

\smallskip

[HS18] Hayes, B. and Sale, A.W. \textit{Metric approximations of wreath products}. Annales de l'Institut Fourier (Grenoble). 68 (2018), pp. 423-455.

\smallskip

[Mal58] Mal'cev, A. \textit{On homomorphisms onto finite groups}. Ivanov. Gos. Ped. Inst. Ucen. Zap. 18 (1958), pp. 49-60.

\smallskip

[MR08] Metaftsis, V. and Raptis, E. \textit{On the profinite topology of right-angled Artin groups}. Journal of Algebra. 3 (2008), pp. 1174-1181.

\smallskip

[Sco78] Scott, P. \textit{Subgroups of surface groups are almost geometric}. Journal of the London Mathematical Society. 17 (1978), pp. 555-565.

\smallskip

[Sun19] Sun, H. \textit{NonLERFness of arithmetic hyperbolic manifold groups and mixed 3-manifold groups}. Duke Mathematical Journal. 4 (2019), pp. 655-696.

\smallskip

[Wil08] Wilton, H. \textit{Hall's theorem for limit groups}. Geometric and Functional Analysis. 18 (2008), pp. 271-303.

\smallskip

[Wis11] Wise, D. \textit{The structure of groups with a quasiconvex hierarchy}. preprint. 2011. retrieved from \url{https://www.math.mcgill.ca/wise/papers.html}.
\end{document}